\numberwithin{equation}{section}
\theoremstyle{plain}
\theoremstyle{plain}
\newtheorem{prop}{Proposition}
\newtheorem{theorem}{Theorem}
\theoremstyle{definition}
\newtheorem{defi}{Definition}
\theoremstyle{remark}
\title{Stable network inference in high-dimensional graphical model \\ using single-linkage}
\author{Emilie Devijver, Mélina Gallopin, Rémi Molinier}
\begin{document}
\maketitle

\begin{abstract}
Stability, akin to reproducibility, is crucial in statistical analysis. This paper examines the stability of sparse network inference in high-dimensional graphical models, where selected edges should remain consistent across different samples. Our study focuses on the Graphical Lasso and its decomposition into two steps, with the first step involving hierarchical clustering using single linkage.
We provide theoretical proof that single linkage is stable, evidenced by controlled distances between two dendrograms inferred from two samples. Practical experiments further illustrate the stability of the Graphical Lasso's various steps, including dendrograms, variable clusters, and final networks. Our results, validated through both theoretical analysis and practical experiments using simulated and real datasets, demonstrate that single linkage is more stable than other methods when a modular structure is present.
\end{abstract}

% \textcolor{magenta}{
% \begin{itemize}
% \item On veut la stabilité , en théorie on a une résultat sur le choix du linkage : la théorie nous dit que c'est single qu'on doit prendre (donc linkage=single), illustration numérique avc cette distance cophénétique machin 
% \item On a un résultat théorique que sur un ptit bout du pipeline mais c'est un probleme dur donc c'est toujours mieux que rien... Et on a une étude de simu.
% \item Dans l'intro faut pas trop parler de shock mais plutot bien présenter les approches "one-step" classique versus "two-step" (dont shock) puis parler que le choix du linkage est une question (cgl dise average, c'est mieux, les gens aiment ward blablabla ) 
% \item Peut etre le titre serait plutot "single-linkage" as a key ingredient for a stable netwok inference in high-dimensional graphical model... je dis nimp mmais a réfléchir peut etre 
% \end{itemize}
% }

\section{Introduction}
\label{introduction}
% \begin{itemize}
% \item Stabilité en général en statistiques et ML
% \item Stabiltié en inférence de réseaux
% \item GGM et Graphical Lasso
% \item Contributions du papier
% \item Plan du papier 
% \end{itemize}
%\paragraph{Stabilité en général en stat et ML}

Interpretability, reproducibility and stability have become central  challenges in statistics due to the recent advances in massive data and black-box models \cite{VDS_2024}. From a learning theory  perspective, stability is crucial for generalization \cite{bousquet2002} while in practice, stability is fundamental for interpretability.  This paper focuses on algorithmic stability, which pertains to  the robustness of a procedure against data perturbation: does a method provide the same results on two perturbed data sets? 
Data perturbation techniques such as jackknife, sub-sampling or bootstrap have been extensively studied both theoretically and practically to assess the stability of statistical methods. 
\cite{Philipp_2017} propose a framework for evaluating the stability of data set and predictive algorithms, concluding that even an inherently unstable method can appear stable over generated data if under the model.
Recent methods are  driven by the concept of stability \cite{yuBernoulli, Yu_2016}. Stability in prediction has been explored accross various  models, including random forests for  interaction studies \cite{basu2018}, bagging  \cite{bagging2024}, and feature selection \cite{pfitser2021}.

Network inference is a domain within statistics where stability is  particularly critical. When performing network inference on two data sets derived from the same model with a small sample size using classical methods, the resulting inferred networks are often markedly different. This variability arises  from the large number of parameters that need to be estimated  and the complexity of the optimization task involved. 
However, without stability, interpretability becomes challenging, which undermines one of the major advantages of graphical models. 
 This is especially pertinent in the context of regulatory networks derived from real omics data, where observations are typically limited \cite{Frazee2011, Krumsiek2011, MICHAILIDIS2013326}.
As a result, practitioners have often criticized the developed methods, opting instead to manually select an appropriate subset of variables to focus on. However, such external knowledge is not always available and could be enhanced by a deeper understanding of the data and the application of machine learning tools.

%Evaluating the correlations between variables has become an important question, as with real data sets the independence assumption is clearly violated. % j'aime bof cette phrase... en plus independance assumption ça me fait penser à "iid" et là on est pas du tout en train de parler de ça... 
Among various tools, graphical models are popular for network inference, and particularly valued for their interpretability. Gaussian Graphical Models (GGMs) are famous for embodying the Markov property.  This property links the edges of the corresponding dependency graph between variables to the non-zero coefficients of the inverse covariance matrix. Hence, GGMs facilitate understanding the complex relationships among variables by translating statistical dependencies into a graphical representation, where each edge signifies a direct conditional dependency between variables.
 The Graphical Lasso \cite{Friedman2007, Yuan2007} is a classical estimator that provides a sparse inverse covariance matrix  $\Theta = \Sigma^{-1}$, solution of the following optimization problem: for a sample $(\mathbf{y}_1, \ldots, \mathbf{y}_n)$ coming from a random variable $\mathbf Y \sim \mathcal{N}_p(0,\Sigma)$, and its sample covariance estimate $S$,
\begin{align*}
 \hat{\Theta}^{GL}(\lambda;S) = \underset{\Theta}{\operatorname{argmax}} \left\{ \log \det \Theta - \text{tr}(S \Theta) - \lambda \|\Theta\|_1 \right\}
\end{align*}
over nonnegative definite matrices $\Theta$, and where $\lambda$ is a nonnegative tuning parameter. 
Many algorithms have been proposed for solving the Graphical Lasso problem, but we focus here on the decomposition given in \cite{Witten2011GM, Mazumder2012}:
\begin{enumerate}
 \item[Step 1] Identify the connected components of the undirected graph with adjacency matrix $A$ associated to the thresholded sample covariance;
 \item[Step 2] Perform Graphical Lasso with parameter $\lambda \geq 0$ on each connected component separately. 
\end{enumerate}
This approach has been practically used in \cite{Danaher2014, Hsieh2014} to reduce the number of parameters to estimate for a fixed level of regularization, thereby improving computational efficiency. In \cite{Tan2015}, it was demonstrated that identifying the connected components in the Graphical Lasso solution (first step) is equivalent to performing single linkage hierarchical clustering based on a similarity matrix derived from the absolute values of the elements of the sample covariance matrix $S$.
This decomposition allows flexibility in the choice of linkage in hierarchical clustering. \cite{Tan2015} switched to average linkage, critiquing the chain effect of single linkage, and selected a model with two clusters, inferring a sparser model within each module. Conversely, \cite{devijverGallopin} retained single linkage but provided non-asymptotic theoretical foundations for selecting the number of clusters.

In this paper, we argue that this decomposition into two steps enhances the stability of network inference. We experimentally illustrate this improvement and theoretically prove that single linkage is stable, whereas other classical linkages, such as average linkage, are not.

Several methods have been proposed to stabilize variable selection in GGMs, primarily based on resampling. In \cite{bachBoLasso, Meinshausen2006}, the authors suggest subsampling the observations, running a model on each sample, and retaining variables selected consistently across all or most samples. Both papers provide theoretical results that guarantee good performance asymptotically with increasing sample sizes. Building on \cite{bachBoLasso}, \cite{colby_2018} evaluate the stability and accuracy of gene regulatory network inference using bootstrap aggregation. Additionally, \cite{Haury}, drawing from \cite{bachBoLasso, Meinshausen2006}, focuses specifically on bootstrap sampling for network inference. 
More recently, \cite{chiquet_2023} proposed a score to measure the overall stability of the set of selected features, introducing a new calibration strategy for stability selection. In a broader context, \cite{LimYu} introduced \texttt{ESCV}, while \cite{Bar-Hen2016} proposed removing the most influential observations to achieve stable networks, akin to the jackknife method.

However, these methods require substantial computation because they rely on subsampling. Furthermore, large sample sizes are necessary to ensure good performance with subsampling techniques.

Our main idea is that \textbf{estimators can be stable by construction and do not necessarily require additional steps to achieve stability}. This intrinsic stability can lead to more efficient and robust network inference.

In this paper, we make the following contributions:
\begin{itemize}
 \item We derive theoretically the stability of the decomposition of the network into independent modules using hierarchical clustering;
 \item We show experimentally on simulated data and on real data sets the stability of the hierarchical clustering, of the subsequent clusters, and of the inferred network. 
\end{itemize}

The remainder of the paper is organized as follows. In Section \ref{modelMethod}, we introduce the main theoretical result, about the stability of the hierarchical clustering. 
Section \ref{simulations} investigates the numerical stability through several experiments on simulated and real dataset: study of the hierarchical clustering for several linkages, study of the considered clusters when selecting a model in the dendogram, and study of the inferred network.

\section{Theoretical result for the stability of the modular decomposition}
\label{modelMethod}
In this section, we are interested in the stability of the hierarchical clustering, in the sense that, if two samples are observed generated from the same distribution, we want to measure how close are the two dendograms provided by the hierarchical clustering.
%Conditional dependencies between variables are encoded by the precision matrix $\Theta = \Sigma^{-1}$. However, since the matrices $\Sigma$ and $\Theta$ have the same block-diagonal structure, 
Let $(\mathbf{y}_1, \ldots, \mathbf{y}_n)$ and $(\mathbf{y}_1, \ldots,\mathbf{y}_{i-1},\tilde{\mathbf{y}}_i, \mathbf{y}_{i+1} \ldots, \mathbf{y}_n)$ be two samples in $\mathbb{R}^{ p}$ from the same multivariate normal distribution with density $\phi_p(\mathbf{0}, \Sigma)$ where $\Sigma_{j,j} = 1$ for all $j \in \{1,\ldots,p\}$. We assume that observations are standardized, and we focus on empirical correlations matrices.

We start by the definition of a dendogram. 
\begin{defi}
A \emph{dendrogram} over $\{1,\ldots,p\}$ is an application $\theta : [0,\infty) \rightarrow \mathcal{P}(\{1,\ldots,p\})$, where $\mathcal{P}(\{1,\ldots,p\})$ is the set of all partitions of $\{1,\ldots,p\}$, such that
\begin{enumerate}
\item $\theta(0)$ is the partition with only singletons;
\item there exists $M>0$ such that for all $t>M$, $\theta(t)=\{1,\ldots,p\}$;
\item for every $t_1\leq t_2$, the partition $\theta(t_1)$ is a refinement of the partition $\theta(t_2)$; and
\item for all $t_0>0$, there exists $\epsilon>0$ such that for all $t\in[t_0,t_0+\epsilon]$, $\theta(t)=\theta(t_0)$.
\end{enumerate}
\end{defi}
In other words, $\theta$ defines a nested family of partitions of $\{1,\ldots,p\}$ which, according to the two other properties, starts with only singletons and ends with the whole space. The last condition ensures right-continuity, which allow the existence of some minimums (for example, in the definition of $\Psi$ that follows). We denote ${\Theta}_{p}$ the set of all dendrograms on $\{1,\ldots,p\}$.

We work here with ultrametrics, that are associated to dendrograms through a one-to-one mapping. 
Ultrametric spaces are metric spaces which satisfy a stronger type of triangle inequality. 
\begin{defi}
A metric space $(X,u)$ is called an \emph{ultrametric space} if, for all $(x,x',x'' )\in X^3$, 
\[\max(u(x,x'),u(x',x''))\geq u(x,x'').\] 
For a finite set $\{1,\ldots,p\}$, we denote $\mathcal{U}_p$ the set of all ultrametrics on $\{1,\ldots,p\}$. 
\end{defi}

Theorem 9 in \cite{CM10} gives a one to one correspondence 
$$
\Psi \colon {\Theta}_p \to \mathcal{U}_p
$$
where, for $\theta\in\Theta_p$, $u=\Psi(\theta)$ is the ultrametric on $\{1,\ldots,p\}$ defined for all $(x,y)\in \{1,\ldots,p\}^2$ by
$$
u(x,y)=\min \left\{ t\geq 0 \mid x\text{ and } y \text{ are in the same subset in the partition $\theta(t)$}\right\}.
$$

Note that $u=\Psi(\theta)$ is also, by definition, the \emph{cophenetic} distance associated to the dendogram $\theta$: $u(i,j)$ corresponds to the height at which stage $i$ and $j$ are merged together. We compare those cophenetic distances for two dendograms using the following distance. 
\begin{defi}\label{def:coph}
 The distance $d_{\text{coph}}$ is defined by, for two dendograms $\theta_{1},\theta_{2} \in \Theta_p$, and their associated ultrametrics $u_1=\Psi(\theta_1)$ and $u_2=\Psi(\theta_2)$,
 \begin{equation}\label{eq:coph}
 d_{\text{coph}}(\theta_1, \theta_2) = \max_{1\leq i,j \leq p} |u_{1}(i,j)-u_{2}(i,j)|.
\end{equation}
\end{defi}

The inverse $\theta=\Psi^{-1}(u)$ for $u\in\mathcal{U}_p$ is given, for $t\geq 0$, by $\theta(t)$ to be the partition obtained from the equivalence relation $\sim_{u,t}$ where, for $(x,y)\in \{1,\ldots,p\}^2$, 
$$
x\sim_{u,t}y\quad \Longleftrightarrow \quad u(x,y)\leq t.
$$

We will denote by $C_p$ the complete simple graph with $\{1,\ldots,p\}$ as set of vertices and a path in $C_p$ with $\nu$ vertices will be encoded by a map $\eta\colon\{1,2,\dots, \nu\} \to \{1,2,\dots, p\}$ where, for all $k\in\{1,2,\dots \nu\}$, $\eta(\nu)$ yields the $k$th vertex of the path.
We introduce in the next definition  the application $u_A$ and then show that it is an ultrametric on $\{1,\ldots, p\}$. 
\begin{defi}
Let $A \in \mathbb{R}^{p\times p}$ be a symmetric matrix with positive nondiagonal entries, and zeros on the diagonal. We define the following application.
\begin{align*}
u_A \colon \{1,\ldots,p\}^2 &\longrightarrow \mathbb{R} \\
(i,j) &\longmapsto
\begin{cases}
 0 \text{ if }i=j,\\
\displaystyle{ \underset{\eta \text{ a path from $i$ to $j$ in $C_p$}}{\min} \left\{\max_{k}(A_{\eta(k),\eta(k+1)}) \right\}} \text{ elsewhere.}
\end{cases}
\end{align*}
\end{defi}

\begin{prop}
\label{lem::uA}
Let $A \in \mathbb{R}^{p\times p}$ be a symmetric matrix with positive nondiagonal entries, and zeros on the diagonal. 
The application $u_A$ defines an ultrametric on $\{1,\ldots,p\}$.
\end{prop}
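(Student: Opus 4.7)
The plan is to verify the four defining properties of an ultrametric directly from the definition of $u_A$, with only the strong (ultrametric) triangle inequality requiring more than a one-line argument.

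First I would check that $u_A$ is non-negative and separates points. For $i=j$, $u_A(i,j)=0$ by definition. For $i\neq j$, every path $\eta$ from $i$ to $j$ in $C_p$ has at least one edge, and the hypothesis that every off-diagonal entry of $A$ is strictly positive forces $\max_k A_{\eta(k),\eta(k+1)} > 0$; taking the minimum over the finite set of paths preserves strict positivity, so $u_A(i,j)>0$. Symmetry follows because $A$ is symmetric and because reversing a path from $i$ to $j$ yields a path from $j$ to $i$ with the same edge weights, so the sets of candidate values for $u_A(i,j)$ and $u_A(j,i)$ coincide.

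The substantive step is the strong triangle inequality $u_A(i,k)\leq \max(u_A(i,j),u_A(j,k))$. The key idea is concatenation of paths: given a path $\eta_1\colon \{1,\ldots,\nu_1\}\to\{1,\ldots,p\}$ from $i$ to $j$ and a path $\eta_2\colon \{1,\ldots,\nu_2\}\to\{1,\ldots,p\}$ from $j$ to $k$, one can define a path $\eta$ from $i$ to $k$ by taking the vertices of $\eta_1$ followed by those of $\eta_2$ (starting from its second vertex, since both agree on $j$). By construction,
\[
\max_{\ell} A_{\eta(\ell),\eta(\ell+1)} = \max\Bigl(\max_{\ell} A_{\eta_1(\ell),\eta_1(\ell+1)},\;\max_{\ell} A_{\eta_2(\ell),\eta_2(\ell+1)}\Bigr).
\]
Hence $u_A(i,k)\leq \max(\max_\ell A_{\eta_1(\ell),\eta_1(\ell+1)},\max_\ell A_{\eta_2(\ell),\eta_2(\ell+1)})$, and taking the minimum separately over $\eta_1$ and over $\eta_2$ (which is legitimate because the right-hand side is monotone in each argument) yields the desired inequality. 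The cases where $i=j$, $j=k$, or $i=k$ are handled by the convention $u_A(i,i)=0$ and the non-negativity already established.

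The only mild subtlety to be careful about is the concatenation itself when paths are required to be, say, simple or to have distinct vertices: the statement allows arbitrary paths in the complete graph $C_p$, so no simplification is needed, but if one wanted to restrict to simple paths, one would argue that cycles in the concatenated path can only decrease the maximal weight, so the inequality still holds. I do not expect any real obstacle beyond writing this concatenation argument cleanly.
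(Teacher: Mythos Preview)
Your proposal is correct and follows essentially the same route as the paper: verify positive definiteness and symmetry directly from the hypotheses on $A$, then obtain the strong triangle inequality by concatenating a path from $i$ to $j$ with a path from $j$ to $k$ and observing that the maximal edge weight along the concatenation is the maximum of the two individual maxima. The paper simply fixes optimal paths $\eta_1,\eta_2$ at the outset rather than minimizing at the end, but this is the same argument; your aside about simple paths is unnecessary here since the paper's definition allows arbitrary paths in $C_p$.
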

\begin{proof}
We check all the properties of an ultrametric. 
%\begin{itemize}
 % \item 
 
\textbf{Positive definiteness:} for all $(i,j)\in \{1,2,\dots,p\}^2$, $u_A(i,j)\geq 0$ and, by assumption, $u_A(i,j)=0$ if and only if $i=j$.
 %\item 
 
\textbf{Symmetric:} since $a$ is symmetric, $u_A$ is also symmetric.
 %\item 
 
\textbf{Strong triangle inequality:} let us prove that for all $(i,j,k)\in \{1,2,\dots,p\}^3$,
\[u_A(i,k)\leq \max(u_A(i,j),u_A(j,k)).\]
Fix $(i,j,k)\in\{1,2,\dots, p\}^3$ and consider two paths in the complete graph $C_p$
\begin{align*}
 \eta_1&\colon \{1,2,\dots, k_1\}\to\{1,2\dots,p\}, \\
 \eta_2&\colon \{1,2,\dots,k_2\}\to\{1,2,\dots,p\},
\end{align*}
such that 
\begin{align*}
 u_A(i,j)&=\max_{l\in\{1,\dots,k_1-1\}}\left(a_{\eta_1(l),\eta_1(l+1)}\right),\\
 u_A(j,k)&=\max_{l\in\{1,\dots,k_2-1\}}\left(a_{\eta_2(l),\eta_2(l+1)}\right).
\end{align*}
Then, if we set $k=k_1+k_2-1$ and $\eta_{1}\cdot\eta_2$ the path $\eta_1$ followed by $\eta_2$ (which is a path from $i$ to $k$ in $C_p$), then $\eta$ has $k$ vertices and we have
\begin{align*}
u_A(i,k) &= \min_{\eta} \max_{l\in\{1,\dots,k-1\}}\left(a_{\eta(l),\eta(l+1)}\right)\\
&\leq \max_{l\in\{1,\dots,k-1\}}\left(a_{\eta_{1}\cdot\eta_2(l),\eta_{1}\cdot\eta_2(l+1)}\right) \qquad \text{($\eta_{1}\cdot\eta_2$ is a path from $i$ to $k$)} \\
&=\max\left( \max_{l\in\{1,\dots,k_1-1\}}\left(a_{\eta_1(l),\eta_1(l+1)}\right), \max_{l\in\{1,\dots,k_2-1\}}\left(a_{\eta_2(l),\eta_2(l+1)}\right) \right)\\
&= \max\left(u_A(i,j),u_A(j,k)\right).
\end{align*}
%\end{itemize}
Hence, $u_A$ is an ultrametric on $\{1,2,\dots,p\}$.
\end{proof}
For a symmetric matrix $A\in \mathbb{R}^p$ with positive nondiagonal entries and zeros on the diagonal, we denote by $\theta_A = \Psi^{-1}(u_A)$ the dendrogram associated to the ultrametric $u_A$. 
%Note that $u_A=\Psi(\theta_A)$ is also, by definition, the \emph{cophenetic} distance associated to the dendogram $\theta_A$: $u_A(i,j)$ corresponds to the height at which stage $i$ and $j$ are merged together. 
%Therefore, to compare two dendograms $\theta_{A_1}$ and $\theta_{A_2}$, induced respectively by two matrices $A_1$ and $A_2$, we consider the following distance:
%$$ d_{\text{coph}}(\theta_1, \theta_2) = \max_{1\leq i,j \leq p} |u_{A_1}(i,j)-u_{A_2}(i,j)|.$$

Remark that, if the matrix $A$ is associated to a distance $d$, the dendogram $\theta_A$ is exactly the one obtained by the single linkage hierarchical clustering with the distance $d$ (see \cite[Corollary 14]{CM10}). One can particularly use $A = \mathbf{1} - |S^1|$, with $S^1$ the sample covariance matrix and $\mathbf{1}$ corresponds to the matrix with $1$ for each coefficient, which is the one constructed in the first step of the Graphical Lasso \cite{Tan2015}. 

% \begin{rmk}
% \label{rmk:eqThres}
% The dendrogram $\theta_A$ get in Proposition \ref{lem::uA} is equal to the dendrogram obtained by thresholding the matrix $A$. This is for example highlighted in \citet[Theorem 2]{Tan2015}.
% \end{rmk}

%If one uses the sample covariance matrix $S^1$, more specifically, it comes back to the hierarchical clustering with single linkage \cite{Mirkin1996, Tan2015}, where $\mathbf{1}$ corresponds to the matrix with $1$ for each coefficient. 

% \subsection{Dendogram's stability}
% \label{theory}
%In this section, we get a theoretical guarantee of stability for the procedure \texttt{shock}.

%The next proposition controls the stability of the ultrametric induced by two symmetric matrices $A_1$ and $A_2$ with positive nondiagonal entries through the cophenetic distance on the corresponding dendograms, which corresponds to the height of the dendrogram where the two branches that include the two objects merge into a single branch. It is a generalisation of \citet[Lemma 15]{CM10} with a similar proof. 

The next proposition gives a control on the  distance introduced in Definition \ref{def:coph}, between dendograms induced by two different matrices.

\begin{prop}
\label{2emeLemmeDeRemi}
Let $A_1,A_2\in\mathbb{R}^{p\times p}$ be two symmetric matrices with positive entries, and zeros on the diagonal.
Then 
\[d_{\text{coph}}(\theta_{A_1}, \theta_{A_2}) \leq \Vert A_1 - A_2 \Vert_{\max} \]
where $||.||_{\max}$ corresponds to the maximum element of the matrix.
\end{prop}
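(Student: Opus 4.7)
Set $\varepsilon = \|A_1-A_2\|_{\max}$, so that $|A_1(a,b)-A_2(a,b)|\leq \varepsilon$ for every pair $(a,b)\in\{1,\dots,p\}^2$. By Definition~\ref{def:coph}, it is enough to show that for every $(i,j)\in\{1,\dots,p\}^2$,
\[
|u_{A_1}(i,j)-u_{A_2}(i,j)|\leq \varepsilon.
\]
The case $i=j$ is immediate since both ultrametrics vanish on the diagonal, so we fix $i\neq j$. By the symmetric roles of $A_1$ and $A_2$, it suffices to prove a one-sided bound $u_{A_1}(i,j)\leq u_{A_2}(i,j)+\varepsilon$.

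The idea is simply to transport an optimal path from one ultrametric to the other. The definition of $u_A$ is a min over paths in $C_p$ of the max of the edge values, and the set of candidate (simple) paths between $i$ and $j$ is finite, so there exists a path $\eta\colon\{1,\dots,\nu\}\to\{1,\dots,p\}$ from $i$ to $j$ that realizes
\[
u_{A_2}(i,j)=\max_{k\in\{1,\dots,\nu-1\}} A_2(\eta(k),\eta(k+1)).
\]
Plugging this same path into the min that defines $u_{A_1}(i,j)$ gives
\[
u_{A_1}(i,j)\leq \max_{k} A_1(\eta(k),\eta(k+1))\leq \max_{k}\bigl(A_2(\eta(k),\eta(k+1))+\varepsilon\bigr)=u_{A_2}(i,j)+\varepsilon,
\]
where the second inequality uses the entrywise bound $A_1\leq A_2+\varepsilon$. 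Exchanging the roles of $A_1$ and $A_2$ yields the reverse inequality, hence $|u_{A_1}(i,j)-u_{A_2}(i,j)|\leq \varepsilon$, which is the claim.

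There is essentially no obstacle: the argument is just the observation that replacing the weight matrix by one that differs by at most $\varepsilon$ entrywise can change the min–max over paths by at most $\varepsilon$. The only minor point worth mentioning is that the minimum defining $u_A$ is attained, which follows because any path can be shortened to a simple path without increasing the maximum edge value (entries are positive), and there are only finitely many simple paths between $i$ and $j$ in $C_p$.
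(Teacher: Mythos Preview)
Your proof is correct and follows essentially the same approach as the paper: pick an optimal path for one ultrametric, plug it into the other, use the entrywise bound to get a one-sided inequality, then swap roles. You add a little extra care (the diagonal case and the remark on attainment of the minimum via simple paths), but the argument is otherwise identical.
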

\begin{proof}
Set $m=\Vert A_1 -A_2\Vert_{\max}$.
Let $(i,j)\in \{1,2,\dots,p\}^2$ and let $\eta_2\colon \{1,2,\dots, K_2\}\to \{1,2,\dots p\}$ be a path such that
\[u_{A_2}(i,j)=\max_{k\in\{1,\dots,K_2-1\}} [A_2]_{\eta_2(k),\eta_2(k+1)}.\]
Then we have,
\begin{align*}
 u_{A_1}(i,j) &\leq \max_{k\in\{1,\dots,K_2-1\}} [A_1]_{\eta_2(k),\eta_2(k+1)} &\text{(by definition of $u_{A_1}$)}\\
 &\leq \max_{k\in\{1,\dots,K_2-1\}}\left(m+[A_2]_{\eta_2(k),\eta_2(k+1)}\right) &\text{(by definition of $m$)}\\
 &= m+u_{A_2}(i,j) &\text{(by choice of $\eta_2$).}
\end{align*}
Hence, $u_{A_1}(i,j)-u_{A_2}(i,j)\leq m$ and symmetrically, $u_{A_2}(i,j)-u_{A_1}(i,j)\leq m$. Therefore, for all $(i,j)\in \{1,2,\dots,p\}^2$, $|u_{A_1}(i,j)-u_{A_2}(i,j)|\leq m$. Thus, 
$$\max_{i,j} |u_{A_1}(i,j)-u_{A_2}(i,j)|\leq m.$$
\end{proof}

Finally, we control the stability of the dendogram constructed in the first step of the Graphical Lasso in the following proposition. 

\begin{prop}\label{prop:stat}
 Let two samples $(\mathbf y_1,\ldots,\mathbf y_n)$ and $(\mathbf y_1, \ldots, \tilde{\mathbf y}_i, \ldots, \mathbf y_n)$ where $\tilde{\mathbf y}_i \sim \mathbf y_i \sim \mathbf{Y}$ and are iid, and $S$ and $\tilde{S}$ the corresponding sample covariance matrices. Then, for $\alpha \in (0,1)$, with probability $1-\alpha$,

 \begin{align*}
 \|S - \tilde{S}\|_{\infty} \leq\frac{2p}{(n-1)\sqrt{\alpha}}. 
 \end{align*}
\end{prop}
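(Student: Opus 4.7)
The plan is to exploit that $S$ and $\tilde S$ differ by a single rank-one update, then apply Chebyshev entrywise and finish with a union bound over the $p^2$ coordinates.

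Since the observations are assumed standardized and hence centered, we may write $S=\frac{1}{n-1}\sum_{k=1}^{n}\mathbf{y}_{k}\mathbf{y}_{k}^{T}$, and since only the $i$-th observation changes,
\[S-\tilde S \;=\; \frac{1}{n-1}\bigl(\mathbf{y}_{i}\mathbf{y}_{i}^{T}-\tilde{\mathbf{y}}_{i}\tilde{\mathbf{y}}_{i}^{T}\bigr).\]
For each pair $(j,j')$, set $Z_{jj'}=y_{i,j}y_{i,j'}-\tilde y_{i,j}\tilde y_{i,j'}$, so that $(S-\tilde S)_{j,j'}=Z_{jj'}/(n-1)$. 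As a difference of two independent copies of the same random variable (both with mean $\Sigma_{j,j'}$), $Z_{jj'}$ is centered; Isserlis' formula applied to a centered bivariate Gaussian with unit variances and correlation $\rho=\Sigma_{j,j'}$ gives $\operatorname{Var}(y_{i,j}y_{i,j'})=1+\rho^{2}\leq 2$, and independence then yields $\operatorname{Var}(Z_{jj'})\leq 4$.

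Chebyshev's inequality therefore gives $\mathbb{P}(|Z_{jj'}|\geq t)\leq 4/t^{2}$; a union bound over the $p^{2}$ pairs with $t=2p/\sqrt{\alpha}$ gives $\mathbb{P}\bigl(\max_{j,j'}|Z_{jj'}|\geq 2p/\sqrt{\alpha}\bigr)\leq\alpha$, and dividing by $n-1$ delivers the claim. The only non-routine ingredient is the variance calculation for a product of correlated standard Gaussians; the rest is mechanical. One small point I would clarify at the outset is that $\|\cdot\|_{\infty}$ here denotes the entrywise maximum (so that this estimate chains with Proposition \ref{2emeLemmeDeRemi} via, for example, $A_{1}=\mathbf{1}-|S|$ and $A_{2}=\mathbf{1}-|\tilde S|$), rather than any operator norm.
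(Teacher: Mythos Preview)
Your proof is correct and follows essentially the same route as the paper: write $(S-\tilde S)_{j,k}=\frac{1}{n-1}(y_{i,j}y_{i,k}-\tilde y_{i,j}\tilde y_{i,k})$, bound its variance by $4/(n-1)^{2}$ using $\operatorname{Var}(y_{i,j}y_{i,k})=1+\rho^{2}\leq 2$, then apply Chebyshev entrywise and a union bound over the $p^{2}$ coordinates. The only cosmetic difference is that you obtain the product variance via Isserlis' identity whereas the paper cites a reference for moments of products of correlated Gaussians; your clarification that $\|\cdot\|_{\infty}$ denotes the entrywise maximum is a welcome addition.
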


\begin{proof}
For $1\leq j,k \leq n$,
$$[S - \tilde S]_{j,k} = \frac{1}{n-1} \left(y_{i,j} y_{i,k} - \tilde{y}_{i,j} \tilde{y}_{i,k}\right).$$

From \cite{NADARAJAH2016201}, we know the distribution function of $ Y_j Y_k$:
\begin{align*}
 f_{Y_jY_k}(z;\rho) &= \frac{1}{\pi \sqrt{1-\rho^2}} \exp\left(\frac{\rho z}{1-\rho^2}\right) K_0\left(\frac{\|z\|}{1-\rho^2}\right),
\end{align*}
where $\rho = \Sigma_{j,k}$ is the correlation between $Y_j$ and $Y_k$. 
Eventually, we can compute the first two moments of $[S - \tilde S]_{j,k}$:
\begin{align*}
E([S - \tilde S]_{j,k}) &= 0,\\
 Var([S - \tilde S]_{j,k}) &= Var\left(\frac{1}{n-1} \left(y_{i,j} y_{i,k} - \tilde{y}_{i,j} \tilde{y}_{i,k}\right)\right)\\
 &= \frac{2}{(n-1)^2} Var\left(y_{i,j} y_{i,k}\right)\\
 &= \frac{2}{(n-1)^2} (1+\Sigma_{j,k}^2),
\end{align*}
where  the second line comes from the independence of $y$ and $\tilde y$, and the last equality comes from \cite{Gaunt2022}, which characterizes the moments of the product of zero mean correlated normal random variables. 
%Then, the distribution function of $[S - \tilde S]_{j,k}$ is $f_{Y_jY_k}(\cdot;\Sigma_{j,k}) \ast f_{Y_jY_k} (\cdot;-\Sigma_{j,l})$. 
%
We need to control $\|S - \tilde{S}\|_\infty$. 
The union bound gives:
\begin{align*}
\mathbb{P}(\|S - \tilde{S}\|_\infty \geq \eta) &=
\mathbb{P}(\max_{j,k}|S_{j,k} - \tilde{S}_{j,k}| \geq \eta)\\
&\leq \sum_{j,k} \mathbb{P}(|S - \tilde{S}|_{j,k} \geq \eta)\\
&= \sum_{j,k} \mathbb{P}(\frac{1}{n-1} \left|y_{i,j} y_{i,k} - \tilde{y}_{i,j} \tilde{y}_{i,k}\right|\geq \eta)\\
&\leq \sum_{j,k} 2 \cdot \frac{1+\Sigma_{k,j}^2}{(n-1)^2\eta^2},
\end{align*}
where the last inequality comes from Tchebychev's inequality. 
Using that the correlations are bounded by $1$, we get that
 \begin{align*}
\mathbb{P}(\|S - \tilde{S}\|_\infty \geq \eta) &\leq \frac{4 p^2}{(n-1)^2\eta^2}.
\end{align*}
\end{proof}
This result is derived for a fixed sample size. Non-asymptotically, we have a control on the difference between the two dendrograms. Moreover, note that we used the Tchebychev inequality, but there may exist tighter concentration inequality. 

Proposition \ref{prop:stat} gives a stability result about the ultrametric induced by the empirical correlation matrix. Using the one-to-one correspondence $\Psi$, it can also be interpreted in terms of stability of the induced dendrogram as explained in \citet[Section 3.5]{CM10}. It leads to the following theorem, which is our main theoretical contribution.

\begin{theorem}\label{thm}
 Let two samples $(\mathbf y_1,\ldots,\mathbf y_n)$ and $(\mathbf y_1, \ldots, \tilde{\mathbf y}_i, \ldots, \mathbf y_n)$ where $\tilde{\mathbf y}_i \sim \mathbf Y$ and are iid, and $S$ and $\tilde{S}$ the corresponding sample covariance matrices. Then, for $\alpha \in (0,1)$, with probability $1-\alpha$,
 \begin{align*}
 d_{coph}( |\theta_{\mathbf{1} - |S|}|, \theta_{\mathbf{1} - |\tilde{S}|}) \leq\frac{2p}{(n-1)\sqrt{\alpha}}.  
 \end{align*}
\end{theorem}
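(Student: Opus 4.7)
The plan is to chain the two propositions already proved in this section. Setting $A_1 = \mathbf{1} - |S|$ and $A_2 = \mathbf{1} - |\tilde S|$, I would first invoke Proposition \ref{2emeLemmeDeRemi} to get
\[
d_{\text{coph}}(\theta_{A_1},\theta_{A_2}) \leq \|A_1 - A_2\|_{\max} = \big\||\tilde S| - |S|\big\|_{\max},
\]
and then use the reverse triangle inequality entrywise, $\big||S_{j,k}| - |\tilde S_{j,k}|\big| \leq |S_{j,k}-\tilde S_{j,k}|$, to replace this by $\|S - \tilde S\|_{\max}$. Finally, Proposition \ref{prop:stat} controls $\|S-\tilde S\|_\infty$ with probability $1-\alpha$ by $\tfrac{2p}{(n-1)\sqrt{\alpha}}$, delivering the stated bound.

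Before applying Proposition \ref{2emeLemmeDeRemi}, I need to verify its hypotheses for $A_1$ and $A_2$: symmetric, zeros on the diagonal, and strictly positive off-diagonal entries. Symmetry and zero diagonal follow from the standardization assumption ($S_{j,j} = \tilde S_{j,j} = 1$), so $[\mathbf{1}-|S|]_{j,j} = 0$. For the off-diagonal positivity, note that $|S_{j,k}| < 1$ almost surely under the non-degenerate Gaussian model, so $[\mathbf 1 - |S|]_{j,k} > 0$ almost surely; this negligible event can be absorbed into the probability $\alpha$. If one prefers, Proposition \ref{2emeLemmeDeRemi} actually only needs non-negativity of the entries to make $u_A$ a well-defined pseudometric and the bound in its proof goes through verbatim, so strict positivity is not essential for the inequality itself.

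The remaining step is purely deterministic manipulation, so there is no real obstacle beyond bookkeeping. The main (minor) subtlety is the passage from $\||\tilde S| - |S|\|_{\max}$ to $\|S - \tilde S\|_{\max}$ via the reverse triangle inequality applied coordinatewise, which is standard. Combining the deterministic bound of Proposition \ref{2emeLemmeDeRemi} with the high-probability bound of Proposition \ref{prop:stat} then yields the theorem on the same event of probability at least $1-\alpha$.
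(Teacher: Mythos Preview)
Your proposal is correct and follows essentially the same route as the paper: apply Proposition~\ref{2emeLemmeDeRemi} to $A_1=\mathbf{1}-|S|$ and $A_2=\mathbf{1}-|\tilde S|$, use the reverse triangle inequality entrywise to pass from $\big\||S|-|\tilde S|\big\|_{\max}$ to $\|S-\tilde S\|_{\max}$, and conclude with Proposition~\ref{prop:stat}. Your additional verification of the hypotheses of Proposition~\ref{2emeLemmeDeRemi} (symmetry, zero diagonal, positive off-diagonal entries almost surely) is a welcome bit of care that the paper leaves implicit.
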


\begin{proof}
By Proposition \ref{2emeLemmeDeRemi} we have 
\[d_{coph}( |\theta_{\mathbf{1} - |S|}|, \theta_{\mathbf{1} - |\tilde{S}|}) \leq \left\Vert  \mathbf{1} - |S| - \left(\mathbf{1} - |\tilde{S}|\right) \right\Vert_{\max}=\left\Vert |S|-|\tilde{S}|\right\Vert_{\max}.\]
By the triangular inequality, the last term is less or equal to $\Vert S-\tilde{S}\Vert_{\max}$ and then Theorem \ref{thm} follows from Proposition \ref{prop:stat}.
\end{proof}

%This theorem is directly derived by combining Proposition \ref{2emeLemmeDeRemi} and Proposition \ref{prop:stat}. 
Asymptotically, the two collections of modules detected by the Graphical Lasso on two samples where only one observation differs, varying the regularization parameter $\lambda$, are the same. This means that the single linkage used in the first step of the Graphical Lasso is a good choice, with respect to the stability of the collection of models that is considered. 

Similarly to \citet[Remark 17]{CM10}, we can show that the complete linkage and the average linkage are unstable: small perturbations of the matrix $A$ may lead to large perturbations of the corresponding ultrametric.

\section{Experiments}
\label{simulations}
In this section, we evaluate in practice the stability of each step of the Graphical Lasso. 
First we provide the experimental design, describing the data generation process, and the two real dataset we are studying.
Then, we illustrate 1/ the stability of dendograms using hierarchical clustering, varying the linkage (illustrating exactly Theorem \ref{thm}), 2/ the stability of the clusters get by cutting the dendogram with some model selection criterion, 3/the stability of the network inference.

\subsection{Experimental design}
The design of the simulations is as follows. 
For a fixed structured covariance matrix $\Sigma$ with a block diagonal structure, we simulate $V$ samples of $n$ observations from a $p$-variate normal distribution with a  mean of zero and the structured covariance matrix $\Sigma$. We then compare the inferred networks pairwise, resulting in $V(V-1)/2$ comparisons. 
The number of variables is set to $p=100$, the sample size to $n=70$, and the number of samples to $V=17$ per fixed covariance matrix. We consider $R=5$ different covariance matrices (generated randomly, each with the same block decomposition), with the number of blocks in the diagonal matrix set to $K=15$, and each block containing  $6$ or $7$ variables.

Two real datasets are considered. \\
 The BRCA dataset is a gene expression dataset for patients with breast cancer, measured with RNA-Sequencing. The data are generated by the TCGA Research Network: \url{http://cancergenome.nih.gov/}, and downloaded from the  web portals \url{https://tcga-data.nci.nih.gov/tcga/} using the TCGA2STAT tool \cite{Wan2015}. We have $n=1212$ samples and $p=9191$ genes, but we focus on the $p=200$ most variable genes. We construct 17 batches of size 70, leading to $1190$ observations.
 \\
 Equities dataset includes stock market data  available in the R package \texttt{huge} and has been studied in \cite{Tan2015}. It contains closing prices of 452 stocks over 1258 trading days. We focus on the $p=200$ most variables stocks' close prices and we construct 17 patches of size 70.

\subsection{Stability of  hierarchical clustering: which linkage method?}
\label{hierarchicalClustering}
In this section, we validate the theoretical results obtained in Section \ref{modelMethod}. 
When applying hierarchical clustering, various linkage methods can be used.  We compare the performance in stability of the most well-known methods: average linkage (AL), complete linkage (CL), McQuitty linkage (ML), single linkage (SL), and Ward linkage (WL).
The measure used to compare dendograms is the distance introduced in Definition \ref{def:coph}, which we normalize to facilitate the analysis: for two matrices $A_1, A_2$,
and their associated dendograms $\theta_{A_1}, \theta_{A_2}$,
%$$ \text{coph}_{n}(\theta_1,\theta_2) = \max\left( \left| \frac{\text{coph}(\theta_1)}{\max(\text{coph}(\theta_1))} - \frac{\text{coph}(\theta_2)}{\max(\text{coph}(\theta_2))} \right| \right) $$

$$ d_{\text{coph}}^N(\theta_{A_1}, \theta_{A_2}) = \max_{1\leq i,j \leq p} \left|\frac{u_{A_1}(i,j)}{\max (u_{A_1})}-\frac{u_{A_2}(i,j)}{\max(u_{A_2})}\right|.$$

\begin{table}[t]

 \caption{Stability of dendograms using hierarchical clustering with several linkages on generated data, BRCA and equities. We display the normalized distance and its standard deviation in parenthesis. Best results are bolded. }
 \label{tab:linkage}
\centering
\begin{tabular}{c|ccccc}
\hline
 $\text{coph}_n$ &AL&CL&ML&SL&WL \\ \hline 
 % coph (norm glob) & 0.54 (0.11) & 0.73 (0.11)& 0.54 (0.11)& 0.33 (0.06) & 0.75 (0.14) \\ \hline 
 Generated data & 0.53 (0.12) &0.72 (0.12)& 0.54 (0.12)& \textbf{0.33} (0.06) & 0.72 (0.14)\\
 BRCA & 0.81 (0.08)& 0.91 (0.05) &0.85 (0.06) & \textbf{0.59} (0.12) &1.01 (0.10)\\
 Equities & 0.87 (0.05) &0.97 (0.03) &0.89 (0.05) & \textbf{0.73} (0.09) &1.18 (0.27)\\
 \hline
 \end{tabular}
 \end{table}

Table \ref{tab:linkage} presents the stability of dendrograms generated using hierarchical clustering with various linkage methods across different data sets: generated data, BRCA and equities. The table displays the normalized distance $d_{\text{coph}}^N$ along with its standard deviation in parentheses. The best results in each row are highlighted in bold.

 When the covariance matrix has a block-diagonal structure, as in the generated data, single linkage (SL) is clearly the most stable method, exhibiting the lowest normalized distance  with a relatively low standard deviation. This indicates that single linkage is less sensitive to small perturbations in the data, maintaining consistent clustering structures.

 For the real data sets, the conclusion holds consistently. In the BRCA data, single linkage again demonstrates superior stability, compared to other methods, suggesting its robustness in clustering biological data where sample variability is often high. Similarly, in the equities data set, single linkage achieves the best stability, indicating its effectiveness in financial data clustering, which often involves high-dimensional and noisy data.

 Analyzing the methods in order of stability across all three data sets, single linkage (SL) is the most stable, followed by average linkage (AL), McQuitty linkage (ML), complete linkage (CL), and Ward's linkage (WL). The higher values for average, McQuitty, complete, and Ward's linkages suggest these methods are more sensitive to data perturbations, resulting in less stable dendrograms. Ward's linkage, in particular, shows the highest  values and standard deviations, indicating it is the least stable method in this context.
 
These results underscore the importance of selecting an appropriate linkage method for hierarchical clustering, especially when stability is a critical concern. Single linkage's robustness across different data types suggests it as a preferable choice for ensuring consistent clustering outcomes in practical applications.

\subsection{Stability of a clustering}
In this section, the dendrogram is cut to focus on clustering. Given that we are considering Gaussian Graphical Models, we can recast this task as a model selection problem. We evaluate two model selection criteria: the Bayesian Information Criterion (BIC) \cite{Schwarz1978} and the slope heuristic (SH) \cite{BirgeMassart2001, Baudry2012}. Additionally, when knowing the ground truth on generated data, we consider the model with 2 clusters, the true number of clusters  $K$ (for generated data), and twice the true number of clusters, $2K$.

Table \ref{tab:ari} displays the Adjusted Rand Index (ARI) \cite{Rand} between clusters derived from hierarchical clustering, cut according to different model selection criteria. The ARI measures the similarity between two partitions, with an ARI of 1 indicating a perfect match. We evaluate the following linkage methods: single linkage (SL), average linkage (AL), complete linkage (CL), McQuitty linkage (ML), and Ward's linkage (WL).

\begin{table}[t]
 \caption{ARI between the clusters get by hierarchical clustering using several linkages and several model selection criterion on generated data, BRCA and equities. Best results are bolded. }
 \label{tab:ari}
\centering
\begin{tabular}{c|c|ccccc}
\hline
Data&criterion&single &average& complete& ward &mcquitty\\ \hline
Generated data&K & 0.19& 0.45 & 0.33& \textbf{0.65} & 0.45\\
&2K & \textbf{0.85} & 0.8 & 0.69& 0.78 & 0.81 \\
\cline{2-7}
&2 & 0.07 & 0.02 & 0.02 & \textbf{0.40} & 0.02\\

&SH & \textbf{0.80} & 0.65 & 0.66& 0.68 & 0.68\\
&BIC & 0.10 & 0.05 & 0.03 & \textbf{0.47} & 0.05\\\hline
BRCA & 2 &0.09 & 0.10& 0.02& \textbf{0.78}& 0.05\\
& SH & \textbf{0.57} & 0.49 & 0.32 &0.37 & 0.44 \\
& BIC & \textbf{0.39} & 0.26 & 0.11 &0.27 & 0.17\\ \hline
Equities & 2 &0.00 & -0.01 & 0.04& \textbf{0.61}& 0.00\\
& SH & \textbf{0.36} & 0.35 & 0.26 &0.22 & 0.31 \\
& BIC & \textbf{0.21} & 0.18 & 0.09 &0.15 & 0.16 \\\hline
\end{tabular}
 \end{table}

For the generated data, the single linkage (SL) and Ward's linkage (WL) show the best performance. Although Ward's linkage performs poorly in terms of distance between dendograms, as seen in Section \ref{hierarchicalClustering}, it performs well when considering clustering, likely due to high variability in early merges but more stability with larger clusters. Notably, single linkage combined with $2K$ clusters achieves the highest ARI, indicating that considering a higher number of clusters enhances stability. Among non-oracle methods, the slope heuristic (SH) combined with single linkage performs the best. Generally, SH provides stable results, whereas BIC performs poorly.

For the real data sets (BRCA and equities), the conclusions are similar: single linkage (SL) and Ward's linkage (WL) are the most competitive. Ward's linkage with 2 clusters is the most stable for both data sets, but not sparse, followed closely by single linkage combined with the slope heuristic (SH).

Table \ref{tab:K} displays the number of clusters selected on average by the slope heuristic and BIC for each dataset. The number of clusters selected by BIC is generally smaller, often underestimating the true number in the simulated data, while the slope heuristic tends to overestimate the number of clusters. Single linkage tends to select a higher number of clusters, contributing to its stability. As with the  $2K$ scenario, the slope heuristic's tendency to overestimate the number of clusters generally contributes to greater stability.

\begin{table}[t]
 \caption{Number of clusters selected (in mean) for several linkages and several model selection criterion on generated data (100 variables, 15 groups), BRCA (200 variables) and equities (100 variables).}
 \label{tab:K}
\centering
\begin{tabular}{c|c|ccccc}
\hline
Data&criterion&single &average& complete& ward &mcquitty\\ \hline
simulated&SH & 25 & 22 & 28 & 22 & 24 \\
&BIC & 10 & 2 & 2 & 5 & 3 \\\hline
equities & SH &155 & 100& 70&60& 90\\
& BIC &105 & 40 & 6 &5 & 28\\\hline
BRCA & SH &112 & 41 & 18 &18 &28\\
& BIC &88 &14 & 5 &7 & 9\\\hline
\end{tabular}
 \end{table}

These results indicate that the choice of linkage method and model selection criterion significantly impacts the stability and accuracy of clustering. Single linkage and the slope heuristic generally provide the most stable results across different data sets and scenarios.

\subsection{Stability of inferred networks}
In this section, we evaluate the stability of  networks inferred by classical methods. 
Stability is assessed using the \emph{normalized Hamming distance} between two graphs $G_1$ and $G_2$, with respective adjacency matrices $A_1$ and $A_2$. The normalized Hamming distance is defined as
\begin{align}
d_H(G_1,G_2) &= \frac{2 \Vert A_1-A_2\Vert _1}{\Vert A_1\Vert_1 + \Vert A_2 \Vert_1}.
\label{HammingDistance}
\end{align}
This metric provides a measure of the difference between two graphs, normalized by the total number of edges in both graphs.
Additionally, we report the density of the inferred graphs, which is the proportion of nonzero coefficients in the adjacency matrix, and the CPU time required for the computations.

For the generated data, we further calibrate the estimation methods using several performance metrics: sensitivity TP/(TP+FN), specificity TN/(TN+FP), precision TP/(TP+FP), and false discovery rate (FDR) FP/(TP+FP), where TP denotes  true positive, FN denotes false negative, FP denotes false positives, and TN denotes true negatives. 
 While these metrics do not directly relate to stability, they help identify methods that infer networks close to the true structure.
One prefers a high precision, recall and specificity, a density close to 0.03 on the generated data (the value on the true graph), a low normalized Hamming distance and a low CPU time.

%Our claim is that, for two samples coming from the same distribution, the normalized Hamming distance between networks inferred by \texttt{shock} is small.

%\subsubsection{Methods}
%\label{sec:methods}
We compare the following strategies, based on or extended from the Graphical Lasso:

\begin{itemize}
\item One-step Graphical Lasso  methods:
\begin{itemize}
    \item BIC: Regularization parameter selected using Bayesian Information Criterion \cite{Schwarz1978}.
\item EBIC: Extended Bayesian Information Criterion with 
$\gamma=0.5$ \cite{EBIC}.
\item STARS: Stability Approach to Regularization Selection \cite{STARS}.
\item ESCV: Extended Stability Criterion Validation \cite{LimYu}.
\end{itemize}
\item Stabilized methods based on the one-step Graphical Lasso: 
\begin{itemize}
    \item BoLasso (BL): Bootstrap Lasso \cite{bachBoLasso}, with regularization parameter fixed by cross-validation, using bootstrap over $m=100$ samples of size $n$ subsampled from the observations with replacement
    \item 
Stability Selection (SS): \cite{Meinshausen2006}, with $m=100$ samples of size $n/2$  subsampled without replacement, and regularization parameter selected such that $\sqrt{0.8p}$ 
  variables are chosen.
\end{itemize}
\item Two-step Graphical Lasso  methods:
\begin{itemize}
    \item Single Linkage: As highlighted in theory and practice, cut with the slope heuristic, with regularization parameter selected by BIC, STARS, ESCV, and BoLasso within each module.
\item CGL: Average linkage with 2 clusters, where in each module the sparser model is selected \cite{Tan2015}.
\end{itemize}
\end{itemize}

Note that Stability Selection was not run in the two-step Graphical Lasso methods due to high computational cost.

 \subsubsection{Results on simulated dataset}

 Table \ref{tab:simu} details the performance of various methods in inferring from simulated data, 
  focusing on metrics such as normalized Hamming distance, graph density, and CPU time.

 \begin{table}[t]
 \caption{Performance on simulated dataset. We compare the performance in estimation (evaluated by the precision, Prec, the recall, Recall, and the specificity, Spec, and the density of the graph; the performance in stability (evaluated by the normalized hamming distance) and the computation time (evaluated by the CPU time). Best scores are bolded. }
 \label{tab:simu}
\centering
\begin{tabular}{c|cccccc}
 \hline
1 step & BIC & EBIC & STARS & ESCV & BL & SS \\ 
 \hline
Dens & 0.14 & \textbf{0.03} & 0.06 & 0.00 & \textbf{0.03} & 0.01 \\ 
 & \textit{0.04} & \textit{0.01} & \textit{0.00} & \textit{0.00} & \textit{0.00} & \textit{0.00} \\ 
 Hamm & 0.37 & 0.04 & 0.12 & \textbf{0.00} & \textbf{0.01} & \textbf{0.00} \\ 
 & \textit{0.07} & \textit{0.02} & \textit{0.01} & \textit{0.00} & \textit{0.00} & \textit{0.00} \\
 CPU & 16 & 16 & 386 & 235 & 323 & 1681 \\ 
 \hline
2 steps & SL-SH$_{\text{BIC}}$ & SL-SH$_{\text{EBIC}}$ & SL-SH$_{\text{STARS}}$ & SL-SH$_{\text{ESCV}}$ & SL-SH$_{\text{BL}}$ & AL-2$_{\text{sparse}}$ \\ 
 \hline
Dens & 0.04 & 0.04 & 0.01 & 0.01 & 0.02 & 0.04 \\ 
 & \textit{0.00} & \textit{0.00} & \textit{0.00} & \textit{0.00} & \textit{0.00} & \textit{0.00} \\ 
 Hamm & 0.03 & 0.06 & 0.02 & 0.05 & \textbf{0.01} & 0.06 \\ 
 & \textit{0.01} & \textit{0.01} & \textit{0.01} & \textit{0.01} & \textit{0.00} & \textit{0.01} \\ 
 CPU & 12 & 8 & 164 & 102 & 1334 & 11 \\
 \hline 
 \end{tabular}

 \bigskip

 \begin{tabular}{c|cccccc}
 \hline
1 step & BIC & EBIC & STARS & ESCV & BL & SS \\ 
 \hline
Prec & 0.39 & 0.93 & 0.67 & \textbf{1.00} & \textbf{1.00} & \textbf{1.00} \\ 
 & \textit{0.08} & \textit{0.04} & \textit{0.02} & \textit{0.00} & \textit{0.00} & \textit{0.00} \\ 
 Recall & \textbf{0.76} & 0.60 & 0.70 & 0.26 & 0.61 & 0.44 \\ 
 & \textit{0.03} & \textit{0.15} & \textit{0.01} & \textit{0.00} & \textit{0.01} & \textit{0.01} \\
 Spec & 0.89 & \textbf{1.00} & 0.97 & \textbf{1.00} & \textbf{1.00} & \textbf{1.00} \\ 
 & 0.04 & 0.00 & 0.00 & 0.00 & 0.00 & 0.00 \\ 
 \hline
2 steps & SL-SH$_{\text{BIC}}$ & SL-SH$_{\text{EBIC}}$ & SL-SH$_{\text{STARS}}$ & SL-SH$_{\text{ESCV}}$ & SL-SH$_{\text{BL}}$ & AL-2$_{\text{sparse}}$ \\ 
 \hline
Prec & 0.94 & 0.94 & 0.99 & 0.99 & \textbf{1.00} & 0.83 \\ 
 & \textit{0.05} & \textit{0.05} & \textit{0.01} & \textit{0.01} & \textit{0.00} & \textit{0.03} \\ 
 Recall & 0.69 & 0.69 & 0.39 & 0.37 & 0.57 & 0.66 \\ 
 & \textit{0.02} & \textit{0.02} & \textit{0.06}& \textit{0.04} & \textit{0.02} & \textit{0.02} \\ 
 Spec & \textbf{1.00} & \textbf{1.00} & \textbf{1.00} & \textbf{1.00} & \textbf{1.00} & 0.99 \\
 & \textit{0.00} & \textit{0.00} & \textit{0.00} & \textit{0.00} & \textit{0.00} & \textit{0.00} \\ 
 \hline
 \end{tabular}
 \end{table}
 
 In the simulated data context, the single linkage method paired with ESCV achieves the highest stability as indicated by the normalized Hamming distance. This method is closely followed by single linkage with BoLasso, which excels in graph density. These approaches, especially those utilizing single linkage, outperform both one-step methods and CGL in stability.
Important Observation: Graphical Lasso using ESCV often appears very stable (Hamming distance is zero) but infers an empty network (density is zero), making it uninteresting.

Graphical Lasso methods using BIC and STARS do not perform well in estimation, producing overly dense graphs and lacking stability. Among the one-step methods, Graphical Lasso with EBIC, BoLasso, and Stability Selection demonstrate better performance in both estimation quality and stability, albeit with slower computation times due to bootstrap procedures.

Two-step methods generally show improved performance with better estimation accuracy and increased stability. This enhancement is attributed to the block-diagonal network structure inherent in the data generation process, which aligns well with the decomposition strategy used in these methods. While these approaches generally require more computation time, they offer superior stability, particularly evident in methods employing single linkage (SL-SH).

In summary, EBIC emerges as the standout among one-step methods, balancing density, stability, and computational efficiency. Two-step methods, particularly those leveraging single linkage, enhance stability by effectively utilizing the network structure. However, the overall choice of method should consider a trade-off between estimation quality, stability, and computational demands based on specific application requirements.

 \begin{table}[t]
 \caption{Performance on BRCA. We compare the density, the performance in stability (evaluated by the normalized Hamming distance Hamm) and the computation time (evaluated by the CPU time).}
 \label{tab:real}
\centering
\begin{tabular}{c|cccccc}
 \hline 
1 step & BIC & EBIC & STARS & ESCV & BL & SS \\ 
 \hline
Dens & 0.05 & 0.00 & 0.09 & 0.00& 0.01 & 0.00 \\ 
& \textit{0.07} & \textit{0.00} & \textit{0.01} & \textit{0.00} & \textit{0.00} & \textit{0.00} \\
Hamm& 0.19 & 0.00& 0.20 & 0.00 & 0.02 & 0.01 \\ 
 &\textit{ 0.16} & \textit{0.00} & \textit{0.01} & \textit{0.00} & \textit{0.00} & \textit{0.00} \\
 CPU& 73 & 73& 1621 & 971&1071 & 7919 \\ \hline
% all Dens & 0.15 & 0.15 & 0.09 & 0.00 & 0.04 & 0.00 \\ 
% all CPU & 40 & 40 & 782 & 445 & 516 & 4257 \\ 
 %\hline
2 steps & SL-SH$_{\text{BIC}}$ & SL-SH$_{\text{EBIC}}$ & SL-SH$_{\text{STARS}}$ & SL-SH$_{\text{ESCV}}$ & SL-SH$_{\text{BL}}$ & AL-2$_{\text{sparse}}$ \\ 
 \hline
Dens & 0.02& 0.02 & 0.01 & 0 & 0.01 & 0.26 \\ 
 &\textit{0.01} &\textit{0} &\textit{0} &\textit{0} &\textit{0} &\textit{0.03}\\ 
 Hamm & 0.04 & 0.04 & 0.02 & 0 & 0.01 & 0.68 \\ 
 &\textit{0.01} &\textit{0.01} &\textit{0.01} &\textit{0} &\textit{0} &\textit{0.04}\\ 
 CPU & 118 & 14 & 250 & 560 & 2372 & 2 \\ \hline
% all Dens & 0.04 & SL-SH$_{\text{EBIC}}$ & SL-SH$_{\text{STARS}}$ & SL-SH$_{\text{ESCV}}$ & SL-SH$_{\text{BL}}$ & 0.23 \\ 
% all CPU & 75 & SL-SH$_{\text{EBIC}}$ & SL-SH$_{\text{STARS}}$ & SL-SH$_{\text{ESCV}}$ & SL-SH$_{\text{BL}}$ & 1 \\ 
% \hline 
 \end{tabular}
 \end{table}

 \begin{table}[t]
 \caption{Performance on equities. We compare the density, the performance in stability (evaluated by the normalized Hamming distance Hamm) and the computation time (evaluated by the CPU time). }
 \label{tab:real:2}
\centering
 \begin{tabular}{c|cccccc}
 \hline
 1 step & BIC & EBIC & STARS & ESCV & BL & SS \\ 
 \hline 
 Dens & 0.00 & 0.00 & 0.07 & 0.00 & 0.00 & 0.00 \\ 
 & \textit{0.00} & \textit{0.00} & \textit{0.01} & \textit{0.00} & \textit{0.00} &\textit{ 0.00} \\ 
 Hamm & 0.00 & 0.00 & 0.20 & 0.00 & 0.01 & 0.01 \\ 
 & \textit{0.00} & \textit{0.00} & \textit{0.01} & \textit{0.00} & \textit{0.00} & \textit{0.00} \\ 
CPU & 67 & 68 & 1535 & 917 & 990&7690 \\ \hline
 %all Dens &0.17 & 0.06 & 0.01 & 0.00 & 0.01 & 0.00 \\ 
% all CPU & 28 & 27& 748 & 345&402 & 5093 \\ 
% \hline
2 steps & SL-SH$_{\text{BIC}}$ & SL-SH$_{\text{EBIC}}$ & SL-SH$_{\text{STARS}}$ & SL-SH$_{\text{ESCV}}$ & SL-SH$_{\text{BL}}$ & AL-2$_{\text{sparse}}$ \\ 
 \hline
Dens & 0.01& 0.01 & 0 & 0 & 0.01 & 0.72 \\ 
 &\textit{0.00} &\textit{0} &\textit{0} &\textit{0} &\textit{0} &\textit{0.01}\\ 
 Hamm & 0.03 & 0.05 & 0.01 & 0.01 & 0.03 & 0.79 \\ 
 &\textit{0.01} &\textit{0.01} &\textit{0.01} &\textit{0.01} &\textit{0.01} &\textit{0.02}\\ 
 CPU & 80 & 6 & 112 & 71 & 651 & 187 \\ \hline
 %all Dens & 0.09 & SL-SH$_{\text{EBIC}}$ & SL-SH$_{\text{STARS}}$ & SL-SH$_{\text{ESCV}}$ & SL-SH$_{\text{BL}}$ & 0.99 \\ 
 %all CPU & 44 & SL-SH$_{\text{EBIC}}$ & SL-SH$_{\text{STARS}}$ & SL-SH$_{\text{ESCV}}$ & SL-SH$_{\text{BL}}$ & 4 \\ 
\end{tabular}
 \end{table}

\subsubsection{Real data analysis}

 Tables \ref{tab:real} and \ref{tab:real:2} present the performance on the BRCA and Equities datasets, respectively, in terms of density, normalized Hamming distance (Hamm), and computation time (CPU time in seconds).

\textbf{One-step Methods:}
ESCV and EBIC  show very high stability (Hamming distance is zero) but infer empty networks (density is zero) on both the BRCA and Equities datasets. While stable, they lack practical utility due to this issue.
STARS outperforms BIC and BoLasso in terms of network estimation but comes with significantly higher computational costs.
BoLasso and Stability Selection (SS) show promise in both stability and network estimation quality. However, they are computationally intensive, especially Stability Selection.

\textbf{Two-step Methods:}
Single Linkage with SH demonstrates notable improvements in stability and estimation performance compared to one-step methods. It effectively leverages the block-diagonal network structure present in the generated data.
CGL is generally unstable, confirming theoretical expectations about the limitations of average linkage methods in this context.
Overall, two-step methods improve network estimation and stability across both datasets. They mitigate the limitations observed in one-step methods, particularly in capturing the block-diagonal structure of the generated data.

In conclusion, while one-step methods like STARS show competitive performance, especially in terms of estimation accuracy, two-step methods, particularly those utilizing Single Linkage with appropriate selection criteria, offer superior stability and estimation quality, albeit at increased computational costs. These findings underscore the importance of method selection based on both performance metrics and computational feasibility in practical applications of graphical model inference.

 \section{Discussion and conclusion}
In this paper, we propose an analysis of stability for several network inference methods, with a focus on hierarchical clustering methods using different linkages, influenced by the decomposition of the graphical lasso into two steps. Our study highlights the potential of single linkage in scenarios with a modular structure, challenging conventional wisdom and opening new avenues for stable network inference methods.

Contrary to the common advice to avoid single linkage due to its chaining property \cite{hastie01statisticallearning}, our results demonstrate that single linkage is more stable than other methods when a modular structure is present. This finding is supported by both theoretical analysis and practical experiments.

While our theoretical results are robust, we were unable to provide a complete proof of stability for the full method combining single linkage with any model selection criterion, and particularly considering the slope heuristic (SL+SH). This challenge arises from the complexity involved in accounting for model selection within the stability framework. Addressing this limitation remains an open question and a promising direction for future research.

\bibliographystyle{chicago}
\bibliography{biblioFINALE}

\end{document}